\newcommand\lift[1]{%
\savestack{\tmpbox}{\stretchto{%
  \scaleto{%
    \scalerel*[\widthof{\ensuremath{#1}}]{\kern-.6pt\bigwedge\kern-.6pt}%
    {\rule[-\textheight/2]{1ex}{\textheight}}
  }{\textheight}%
}{0.5ex}}%
\stackon[1pt]{#1}{\tmpbox}%
}
\theoremstyle{plain}
\newtheorem{theorem}{Theorem}[section]
\newtheorem{prop}[theorem]{Proposition}
\newtheorem{lemma}[theorem]{Lemma}
\newtheorem{defin}[theorem]{Definition}
\newtheorem{cor}[theorem]{Corollary}
\newtheorem{oss}[theorem]{Remark}
\newtheorem{cl}[theorem]{Claim}
\numberwithin{equation}{section}
\begin{document}
\title{On the topology of surfaces with the generalised simple lift property}

\author{Francesca Tripaldi}
\address{Department of Mathematics and Statistics, University of Jyv\"{a}skyl\"{a}, Jyv\"{a}skyl\"{a} FI-40014, Finland}
\email{francesca.f.tripaldi@jyu.fi}

\subjclass[2010]{53A10, 51H05}

\begin{abstract}
In this paper, we study the geometry of surfaces with the generalised simple lift property. This work generalises previous results by Bernstein and Tinaglia \cite{BeT} and it is motivated by the fact that leaves of a minimal lamination obtained as a limit of a sequence of property embedded minimal disks satisfy the generalised simple lift property.
\end{abstract}

 \maketitle
 \tableofcontents

\section{Introduction}
Motivated by the work of Colding and Minicozzi \cite{cm21,cm22,cm23,cm24} and Hoffman and White \cite{hofw1} on minimal laminations obtained as limits of sequences of properly embedded minimal disks, in~\cite{BeT} Bernstein and Tinaglia  introduce the concept of the simple lift property. Interest in these surfaces arises because leaves of a minimal lamination obtained as a limit of a sequence of properly embedded minimal disks satisfy the simple lift property. In~\cite{BeT} they prove that an embedded minimal surface $\Sigma\subset\Omega$ with the simple lift property must have genus zero, if $\Omega$ is an orientable three-manifold satisfying certain geometric conditions. In particular, one key condition is that $\Omega$ cannot contain closed minimal surfaces.

In this paper, we generalise this result by taking an arbitrary orientable three-manifold $\Omega$ and introducing the concept of the generalised simple lift property, which extends the simple lift property in \cite{BeT}. Indeed, we prove that leaves of a minimal lamination obtained as a limit of a sequence of properly embedded minimal disks satisfy the generalised simple lift property and we are able to restrict the topology of an arbitrary surface $\Sigma\subset\Omega$ with the generalised simple lift property. 

Among other things, we prove that the only possible compact leaves of a minimal lamination obtained as limits of a sequence of properly embedded minimal disks are the torus in the orientable case, the Klein bottle and the connected sum of three and four projective planes in the non-orientable case. 

\vspace{.5cm}
\textbf{Acknowledgements}: the author wishes to thank her advisor Giuseppe Tinaglia for his help and support during her PhD at King's College London. This work has been partially supported by the Academy of Finland (grant 288501 `\emph{Geometry of subRiemannian groups}') and by the European Research Council  (ERC Starting Grant 713998 GeoMeG `\emph{Geometry of Metric Groups}').


\section{Notation and definitions}

Throughout the paper, we will assume $\Omega$ to be an open subset of an orientable three-dimensional Riemannian manifold $(M,g)$. We denote by ${dist}^\Omega$ the distance function on $\Omega$ and by $\exp^\Omega$ the exponential map. Therefore, we have
\begin{align*}
\exp^\Omega_p:\mathbb{B}_r(0)\to\mathcal{B}_r(p)\,,
\end{align*}
where $\mathbb{B}_r(0)$ is the Euclidean ball in $\mathbb{R}^3$ of radius $r$ centred at the origin, and $\mathcal{B}_r(p)$ is the geodesic ball in $M$ of radius $r$ centred at $p\in \Omega$.

For an embedded surface $\Sigma$, we write 
\begin{align*}
\exp^\perp:N\Sigma\to\Omega
\end{align*}
to denote the normal exponential map, where $N\Sigma$ is the normal bundle.

If $N\Sigma$ is trivial, then we say that $\Sigma$ is \textit{two-sided}, otherwise we say that $\Sigma$ is \textit{one-sided}. As $\Omega$ is oriented, $\Sigma$ being two-sided is equivalent to saying that $\Sigma$ is orientable.

Let us fix a subset $U\subset N\Sigma$, then we define
\begin{align*}
\mathcal{N}_U(\Sigma):=\exp^\perp(U)\,.
\end{align*}

The set $\mathcal{N}_U(\Sigma)$ is \textit{regular} if there is an open set $V$ with $U\subset V$ such that $\exp^\perp:V\to\mathcal{N}_V(\Sigma)$ is a diffeomorphism. If $\mathcal{N}_U(\Sigma)$ is regular, then the map $\Pi_\Sigma:\mathcal{N}_U(\Sigma)\to\Sigma$, given by the nearest point projection, is smooth and for any $(q,\mathbf{v})\in T\mathcal{N}_U(\Sigma)$, there is a natural splitting 
\begin{align*}
\mathbf{v}=\mathbf{v}^\perp+\mathbf{v}^T\,,
\end{align*}
where $\mathbf{v}^\perp$ is orthogonal to $\mathbf{v}^T$, and $\mathbf{v}^T$ is perpendicular to the fibres of $\Pi_\Sigma$.

We say that such $\mathbf{v}$ is \textit{$\delta$-parallel} to $\Sigma$ if
\begin{align*}
\vert\mathbf{v}^\perp\vert\le\delta\vert\mathbf{v}\vert\;\text{ and }\;\frac{1}{1+\delta}\vert\mathbf{v}^T\vert\le\vert d(\Pi_\Sigma)_q(\mathbf{v})\vert\le (1+\delta)\vert\mathbf{v}^T\vert\,.
\end{align*}

Given $\epsilon>0$, we set $U_\epsilon:=\lbrace (p,\mathbf{v})\in N\Sigma\mid \vert\mathbf{v}\vert<\epsilon\rbrace$ and define $\mathcal{N}_\epsilon(\Sigma)$, the $\epsilon$-neighbourhood of $\Sigma$, to be $\mathcal{N}_{U_\epsilon}(\Sigma)$. If $\Sigma$ is an embedded smooth surface and $\Sigma_0\subset\Sigma$ is a pre-compact subset, then $\exists\,\epsilon>0$ so that $\mathcal{N}_\epsilon(\Sigma_0)$ is regular.

Given a fixed embedded surface $\Sigma$ and $\delta\ge 0$, we say that another embedded smooth surface $\Gamma$ is a \textit{smooth $\delta$-graph} over $\Sigma$ if there exists an $\epsilon>0$ such that:
\begin{itemize}
\item[1.] $\mathcal{N}_\epsilon(\Sigma)$ is a regular $\epsilon$-neighbourhood of $\Sigma$;
\item[2.] either $\Gamma$ is a proper subset of $\mathcal{N}_\epsilon(\Sigma)$ or $\Gamma$ is a proper subset of $\mathcal{N}_\epsilon(\Sigma)\setminus\Sigma$;
\item[3.] for all $(q,\mathbf{v})\in T\Gamma $ is $\delta$-parallel to $\Sigma$.
\end{itemize}

We say that a smooth $\delta$-graph $\Gamma$ over $\Sigma$ is a \textit{smooth $\delta$-cover} of $\Sigma$, if $\Gamma$ is connected and $\Pi_\Sigma(\Gamma)=\Sigma$.

Let $\gamma:[0,1]\to\Sigma$ be a smooth curve in $\Sigma$. We will also denote the image of such $\gamma$ as $\gamma$.

We say that a curve $\lift{\gamma}:[0,1]\to\mathcal{N}_\delta(\gamma)$ is a $\delta$\textit{-lift} of $\gamma$ if
\begin{itemize}
\item $\mathcal{N}_\delta(\gamma)$ is regular;
\item $\Pi_\Sigma\circ\lift{\gamma}=\gamma$;
\item for all $t\in [0,1]$, $(\lift{\gamma}(t),\lift{\gamma}'(t))$ is $\delta$-parallel to $\Sigma$.
\end{itemize}

This definition extends to piece-wise $C^1$ curves in an obvious manner.

\section{The generalised simple lift property for a finite number of curves}

Let us introduce the concept of lifts of curves into embedded disks.
\begin{defin}\label{genliftprop} Generalised simple lift property.

Let $\Sigma$ be a surface in $\Omega$. Then $\Sigma$ has the \textit{generalised simple lift property} if, for any $\delta >0$ and for any $p\in\Sigma$, the following holds.

Given $\gamma_1,\ldots,\gamma_n:[0,1]\to\Sigma$ a collection of $n$ arbitrary smooth curves, and any pre-compact open subset $U\subset\Sigma$ such that $\gamma_1\cup\cdots\cup\gamma_n\subset U$, there exist $t_i\in [0,1]$ for which $\gamma_i(t_i)=p$ for any $i=1,\ldots,n$, as well as:
\begin{itemize}
\item[i.] a constant $\epsilon=\epsilon(U,\delta)>0$;
\item[ii.] $\Delta\subset\Omega$ an embedded disk;
\item[iii.] $\lift{\gamma}_i:[0,1]\to \mathcal{N}_\epsilon(U)$ $\delta$-lifts of $\gamma_i$
\end{itemize}
such that 
\begin{itemize}
\item[1.] $\lift{\gamma}_i\subset\Delta\cap\mathcal{N}_\epsilon(U)$;
\item[2.] $\Delta\cap\mathcal{N}_\epsilon(U)$ is a $\delta$-graph over $U$;
\item[3.] there exists a point $q\in\mathcal{N}_\epsilon(p)\cap\Delta$ such that $q\in\lift{\gamma}_i$ for every $i=1,\ldots,n$;
\item[4.] the connected component of $\Delta\cap\mathcal{N}_\epsilon(U)$ containing $\lift{\gamma}_i$ is a $\delta$-cover of $U$.
\end{itemize}

 The union $\lift{\gamma}_1\cup\cdots\cup\lift{\gamma}_n$ is called the \textit{generalised simple} $\delta$\textit{-lift of} $\gamma_1\cup\cdots\cup\gamma_n$ \textit{pointed at}  $(p,q)$ \textit{into} $\Omega$.
 
One should notice that the embedded disk $\Delta\subset\Omega$ that the definition implies exists will depend on the choice of the constant $\delta>0$, the $n$ curves $\gamma_1,\ldots,\gamma_n$ and the pre-compact subset $U\subset\Sigma$ that contains the curves. Notation wise, throughout this paper, when studying a lift of $n$ given curves $\gamma_1,\ldots,\gamma_n$, if we want to highlight the dependence of the construction on the choice of curves, we will denote the embedded disk $\Delta$ that contains the generalised simple $\delta$-lift of $\gamma_1\cup\cdots\cup\gamma_n$ by $\Delta(\gamma_1,\ldots,\gamma_n)$.
\end{defin}

A surface with the generalised simple lift property is one for which, in an effective sense, the universal cover of the surface can be properly embedded as a disk near the surface. For this reason, to understand the topology of the surface $\Sigma$, it is important to understand the lifting behaviour of closed curves. 

With this in mind, we give the following definition.

\begin{defin} Closed and open lift property.\\
\indent Let $\Sigma\subset \Omega$ be an embedded surface with the generalised simple lift 
property. If $\gamma : [0,1]\to \Sigma$ is a smooth closed curve, 
then $\gamma$ has the \emph{open lift property} if there  exists a $\delta_0>0$ 
so that, for all  $\delta_0>\delta>0$, $\gamma$ does not have a closed generalised simple 
$\delta$-lift $\lift{\gamma}:[0,1]\to \mathcal{N}_\delta(\Sigma)$.  Otherwise, 
$\gamma$ has the \emph{closed lift property}.

If a closed curve $\gamma$ has the closed lift property, then there is a sequence $\delta_i\to 0$ so that there are closed simple $\delta_i$-lifts $\lift{\gamma}_i$ of $\gamma$.  

If it is possible to choose the lifts of a curve $\gamma$ to be embedded (and in particular non-intersecting), we say $\gamma$ has the \emph{embedded (closed/open) lift property}.
\end{defin}

\begin{oss}
In Proposition \ref{Lifting Lemma} below and in Lemma \ref{no_two_closed} we will be constructing the lift of two (or more) simple closed curves intersecting at one point by considering the union of these curves as a single curve.

In order to fix the notation, let us assume we want to lift two curves $\alpha,\,\beta:[0,1]\to\Sigma$ intersecting in one point $\lbrace p\rbrace=\alpha\cap\beta$. Then we will denote by $\mu:=\alpha\circ\beta$ the curve parametrised as $\mu:[0,1]\to\Sigma$ with
\begin{align*}
\mu\big\vert_{[0,t_0]}=\tilde{\alpha}\,,\;\mu\big\vert_{[t_0,1]}=\tilde{\beta}
\end{align*}
where $\tilde{\alpha}=\alpha(\frac{t}{t_0})\colon [0,t_0]\to\Sigma$ and $\tilde{\beta}=\beta(\frac{t-t_0}{1-t_0})\colon[t_0,1]\to\Sigma$ are appropriate reparametrisations of $\alpha$ and $\beta$ respectively, so that $\mu(0)=\mu(t_0)=\mu(1)$, for some $t_0\in (0,1)$.
\end{oss}

The proposition below, which we will call \textit{Lifting Lemma}, is analogous to Proposition 4.4 in Bernstein and Tinaglia's paper \cite{BeT}. 
\begin{prop}{\textbf{Lifting lemma}}\label{Lifting Lemma}\\
\indent Let $\Sigma\subset\Omega$ be an embedded surface with the generalised simple lift property. Let us take into consideration two closed, smooth curves
\begin{align*}
\alpha:[0,1]\to \Sigma\;\text{ and }\;\beta:[0,1]\to \Sigma
\end{align*}
satisfying the following properties:
\begin{itemize}
\item[1.] $\alpha\cap\beta=\lbrace p\rbrace$, where $p=\alpha(0)=\beta(0)$;
\item[2.] $\exists\,U\subset \Sigma$ a two-sided pre-compact open set that contains both curves, i.e. $\alpha\cup\beta\subset U$;
\item[3.] for this choice of $U\subset\Sigma$, there exists a $\delta>0$ for which the embedded disk $\Delta=\Delta(\alpha,\beta)\subset\Omega$ given by Definition \ref{genliftprop} contains open lifts for both $\alpha$ and $\beta$.
\end{itemize}
Then the curve $\mu:=\alpha\circ \beta\circ\alpha^{-1}\circ\beta^{-1}$ has a closed lift into this disk $\Delta(\alpha,\beta)$.

If, in addition, both $\alpha$ and $\beta$ have embedded open lifts in $\Delta(\alpha,\beta)$, then one of the following curves has an embedded closed lift in $\Delta(\alpha,\beta)$:
\begin{align*}
\mu\;,\;\alpha\circ \beta\;,\;\beta\circ\alpha^{-1}\,.
\end{align*}
\end{prop}
\begin{proof}
Let us take into consideration the curve $\mu=\alpha\circ\beta\circ\alpha^{-1}\circ\beta^{-1}$ as defined above.

Since $\Sigma$ has the generalised simple lift property, then for any $\delta >0$ there exist:
\begin{itemize}
\item[i.] a positive constant $\epsilon>0$;
\item[ii.] an embedded disk $\Delta=\Delta(\alpha,\beta)$;
\item[iii.] $\lift{\mu}:[0,1]\to \mathcal{N}_\delta(U)$ a $\delta$-lift of $\mu$;
\end{itemize}
such that $\Delta(\alpha,\beta)\cap\mathcal{N}_\epsilon(U)$ is a $\delta$-graph over $U$, $\lift{\mu}\subset\Delta$, and $\Gamma$, the connected component of $\Delta(\alpha,\beta)\cap\mathcal{N}_\epsilon(U)$ containing $\lift{\mu}$, is a $\delta$-cover of $U$.

By assumption, we can consider the embedded disk $\Delta(\alpha,\beta)$ which contains opens lifts of both $\alpha$ and $\beta$.

By re-parametrising appropriate restrictions of $\lift{\mu}$, we can write $\lift{\mu}=\lift{\alpha}\circ \lift{\beta}\circ\lift{\alpha^{-1}}\circ \lift{\beta^{-1}}$, where the $\lift{\alpha},\,\lift{\beta},\,\lift{\alpha^{-1}},\,\lift{\beta^{-1}}:[0,1]\to \Gamma$ are the $\delta$-lifts of $\alpha,\,\beta,\,\alpha^{-1}$ and $\beta^{-1}$ respectively.

Let us now pick a small simply-connected neighbourhood $V$ of the point $p=\mu(0)$ such that $V\subset U$. By construction, $\Delta(\alpha,\beta)$ is an embedded disk, which means that we can order by height the components of $\Pi_\Sigma^{-1}(V)\cap\Delta(\alpha,\beta)$, where $\Pi_\Sigma$ is the usual projection map onto $\Sigma$. We will denote these ordered components as $\Pi_\Sigma^{-1}(V)\cap\Delta(\alpha,\beta)=\lbrace \lift{V}(1),\ldots,\lift{V}(n)\rbrace\,$. The number $n$ of components will of course depend on the choice of $\delta>0$ and $\Delta(\alpha,\beta)$.

By construction, we then have:
\begin{align*}
\Pi_\Sigma\big(\,\lift{\alpha(0)}\,\big)&=\Pi_\Sigma\big(\,\lift{\alpha(1)}\,\big)=\Pi_\Sigma\big(\,\lift{\beta(0)}\,\big)=\Pi_\Sigma\big(\,\lift{\beta(1)}\,\big)=\Pi_\Sigma\big(\,\lift{\alpha^{-1}(0)}\,\big)=\\&=\Pi_\Sigma\big(\,\lift{\alpha^{-1}(1)}\,\big)=\Pi_\Sigma\big(\,\lift{\beta^{-1}(0)}\,\big)=\Pi_\Sigma\big(\,\lift{\beta^{-1}(1)}\,\big)=p\,.
\end{align*}

Without loss of generality, one can assume $\lift{\mu}$ to be the generalised simple $\delta$-lift of $\mu$ pointed at $(p,q)$ with $q=\lift{\alpha(0)}$. Moreover, a priori, these points will all belong to different components of $\Pi_\Sigma^{-1}(V)\cap\Delta(\alpha,\beta)$ and we will denote them as:
\begin{itemize}
\item[] $\lift{p}(0):=\lift{\alpha(0)}=q\,$;
\item[] $\lift{p}(1):=\lift{\alpha(1)}=\lift{\beta(0)}\,$;
\item[] $\lift{p}(2):=\lift{\beta(1)}=\lift{\alpha^{-1}(0)}\,$;
\item[] $\lift{p}(3):=\lift{\alpha^{-1}(1)}=\lift{\beta^{-1}(0)}\,$;
\item[] $\lift{p}(4)=\lift{\beta^{-1}(1)}\,$;
\end{itemize}
so that $\lift{p}(j)\in\lift{V}(l)$, where $l$ is a function of $j$ over the natural numbers, that is $l=l(j)\in\mathbb{N}$.

Using this function $l$, we will study the signed number of sheets between the end points of the lifts of the curves $\alpha,\,\alpha^{-1},\,\beta$ and $\beta^{-1}$:
\begin{align*}
m[\alpha]:=l(1)-l(0);\\
m[\beta]:=l(2)-l(1);\\
m[\alpha^{-1}]:=l(3)-l(2);\\
m[\beta^{-1}]:=l(4)-l(3)\,.
\end{align*}

By assumption, both $\lift{\alpha}$ and $\lift{\beta}$ are open lifts, so that $m[\alpha],\,m[\beta]\neq 0$, which also implies $m[\alpha^{-1}],\,m[\beta^{-1}]\neq 0$.

We will now prove that $m[\alpha]=-m[\alpha^{-1}]$ and $m[\beta]=-m[\beta^{-1}]$, and therefore that $\lift{\mu}$ is closed.

Let us consider the two following cases separately:
\begin{itemize}
\item $m[\alpha]\cdot m[\beta] >0$
\end{itemize}

Without loss of generality, we can assume in this case that both numbers are positive: $m[\alpha],m[\beta] >0$. Then, using the fact that the disk $\Delta(\alpha,\beta)$ is embedded and that $U$ is two-sided, one can consider a disjoint family of \textit{parallel lifts} of $\alpha$, which we will denote by $\lift{\alpha}[i]$. The first member of this family is $\lift{\alpha}[0]=\lift{\alpha}$ and the subsequent representatives of the family are those lifts $\lift{\alpha}[i]$ of $\alpha$ such that $\lift{\alpha}[i](0)$ will belong to $\lift{V}\big(l(0)+i\big)$, which is the lift that starts $i$ sheets above $\lift{\alpha(0)}=q$. By the embeddedness of $\Delta(\alpha,\beta)$ and the two-sidedness of $U$, the signed number of graphs between $\lift{\alpha}[0](t)$ and $\lift{\alpha}[i](t)$ is constant in $t$, so that also the lifts $\lift{\alpha}[i]$ also have endpoints $i$ sheets above the endpoint of $\lift{\alpha}$.

Clearly, the lifts $\lift{\alpha}[i]$ are well-defined as long as $i\le m[\beta]$. Furthermore, $\lift{\alpha}[m[\beta]]$ has end point which is the same as the end point of $\lift{\beta}$. Let us now take into consideration $\lift{\alpha}[m[\beta]]^{-1}$. This is a lift of $\alpha^{-1}$ that starts at $\lift{\beta(1)}$, which means that $\lift{\alpha}[m[\beta]]^{-1}$ and $\lift{\alpha^{-1}}$ must coincide. This then implies that $m[\alpha]=-m[\alpha^{-1}]$. 

Repeating the same argument for $\lift{\beta}[-m[\alpha]]$ and $\lift{\beta}$ shows that $m[\beta]=-m[\beta^{-1}]$.
\begin{itemize}
\item $m[\alpha]\cdot m[\beta] <0$
\end{itemize}

In this case, we can assume without loss of generality that $m[\alpha]>0$ and $m[\beta]<0$.

Let us first assume that $m[\alpha]+m[\beta]+m[\alpha^{-1}]=:M\ge 0$, which means that the end point of $\lift{\alpha^{-1}}$ is not below the initial point of $\lift{\alpha}$: it is $M$ sheets above $\lift{\alpha}$. Repeating the same argument as in the previous case, we can take into consideration the parallel lift of $\lift{\alpha^{-1}}$ whose endpoint is the initial point of $\lift{\alpha}$, namely $\lift{\alpha^{-1}}[-M]$. The lift $\lift{\alpha^{-1}}[-M]^{-1}$ will then be a lift of $\alpha$ and it coincides with $\lift{\alpha}$, which implies as before that $m[\alpha]=-m[\alpha^{-1}]$. Therefore the initial assumption $m[\alpha]+m[\beta]+m[\alpha^{-1}]\ge 0$ leads to a contradiction, since by hypothesis $m[\beta]<0$.

It is then the case that $m[\alpha]+m[\beta]+m[\alpha^{-1}]=:M < 0$. Again, we can take into consideration the parallel lift of $\lift{\alpha}$ whose start point coincides with the endpoint of $\lift{\alpha^{-1}}$, namely $\lift{\alpha}[M]$. Therefore $\lift{\alpha}[M]^{-1}$ is the lift of $\alpha^{-1}$ with the same endpoint as $\lift{\alpha^{-1}}$: $\lift{\alpha}[M]^{-1}$ and $\lift{\alpha^{-1}}$ coincide, and so $m[\alpha]=-m[\alpha^{-1}]$. The same argument shows that in this case $m[\beta]=-m[\beta^{-1}]$.

Finally, if $\alpha$ and $\beta$ have embedded open lifts in $\Delta(\alpha,\beta)$, then, because they meet at only one point in $\Sigma$, the curves $\lift{\alpha}\circ \lift{\beta},\,\lift{\beta}\circ\lift{\alpha^{-1}}$ and $\lift{\alpha^{-1}}\circ \lift{\beta^{-1}}$ are all embedded. Hence, the only way that $\lift{\mu}$ can fail to be embedded is if one of the first two is closed.
\end{proof}

We will now proceed to study the topology of surfaces with the generalised simple lift property.  

\section{The topology of embedded surfaces with the generalised simple lift property}

The geometrical example at the centre of this initial topological study is the double torus minus a disk, that is the connected sum of two tori with a disk removed (see Figure 1).

\begin{center}
\begin{tikzpicture}[scale=0.9]
\node at (0,-2.5) {Figure 1: $\mathbb{T}^2\#\mathbb{T}^2\setminus {D}$};
\draw[xscale=0.8] (-5.5,0) to[out=90,in=180] (-2.75,1.5) to[out=0,in=180] (0,1.25) to[out=0,in=180] (2.75,1.5) to[out=0,in=90] (5.5,0) ;
\draw[rotate=180,xscale=0.8] (-5.5,0) to[out=90,in=180] (-2.75,1.5) to[out=0,in=180] (0,1.25) to[out=0,in=180] (2.75,1.5) to[out=0,in=90] (5.5,0) ;

\draw[xshift=-0.3cm,yshift=-0.1cm,xscale=0.8]  (-3.3,0.1) to[out=65,in=180] (-2.5,0.5) to[out=0,in=115] (-1.7,0.1);
\draw[xshift=-0.3cm,yshift=-0.1cm,xscale=0.8]  (-3.4,0.3) to[out=-75,in=180] (-2.5,-0.2) to[out=0,in=-105] (-1.6,0.3);
\draw[xshift=4.3cm,yshift=-0.1cm,xscale=0.8]  (-3.3,0.1) to[out=65,in=180] (-2.5,0.5) to[out=0,in=115] (-1.7,0.1);
\draw[xshift=4.3cm,yshift=-0.1cm,xscale=0.8]  (-3.4,0.3) to[out=-75,in=180] (-2.5,-0.2) to[out=0,in=-105] (-1.6,0.3);

 \draw[black] (3.8,0) to[out=-90,in=180] (4.2,-0.8) ;
 \draw[black] (3.8,0) to[out=90,in=180] (4.2,0.8);

\end{tikzpicture}

\end{center}

By the classification of compact surfaces, we know that compact orientable surfaces are either the sphere $\mathbb{S}^2$ or the connected sum of $n$ tori, $\mathbb{T}^2\#\cdots\#\mathbb{T}^2$, while non-orientable surfaces are given by the connected sum if $n$ projective planes $\mathbb{R}P^2\#\cdots\#\mathbb{R}P^2$. This classification extends to non-compact surfaces by taking into consideration boundary components.

\begin{oss}
In order to simplify the notation, we will denote by $\mathbb{T}^2_n$ the connected sum of $n$ tori, and by $\mathbb{R}P^2_n$ the connected sum of $n$ projective planes.
\end{oss}

\begin{lemma}\label{no_two_closed} Let $\Sigma$ be an embedded surface with the generalised simple lift property. Then two smooth,
non-separating Jordan curves $\gamma_1$ and $\gamma_2$ intersecting transversally at exactly one point in $\Sigma$ cannot both have a closed $\delta$- lift into any $\Delta=\Delta(\gamma_1,\gamma_2)$, for any $\delta>0$.
\end{lemma}
\begin{proof}
Let $\gamma_1,\,\gamma_2\colon [0,1]\to \Sigma$ be two smooth non-separating Jordan curves intersecting transversally at a single point $p$, that is $p=\gamma_1(0)=\gamma_1(1)=\gamma_2(0)=\gamma_2(1)$. 

Arguing by contradiction, let us assume that both curves admit closed $\delta$-lifts on an embedded disk $\Delta(\gamma_1,\gamma_2)$. In other words, there exists a choice of $\delta>0$ and $U\subset\Sigma$ pre-compact open subset containing both $\gamma_1$ and $\gamma_2$, such that the embedded disk $\Delta=\Delta(\gamma_1,\gamma_2)$ given by Definition \ref{genliftprop} contains $\lift{\gamma_1}$ and $\lift{\gamma_2}$ two closed $\delta$-lifts of $\gamma_1$ and $\gamma_2$ respectively.

Let us then consider the generalised simple lift of ${\gamma_1}\cup{\gamma_2}$ on $\Delta(\gamma_1,\gamma_2)$ pointed at $(p,q)$. We have therefore constructed two simple closed curves $\lift{\gamma_1}$ and $\lift{\gamma_2}$ contained in an embedded disk $\Delta=\Delta(\gamma_1,\gamma_2)$ that intersect transversally in a single point $q\in\Delta(\gamma_1,\gamma_2)$.

This represents
a contradiction to the mod 2 degree theorem applied to the Jordan-Brouwer separation
theorem. This contradiction finishes the proof of the lemma.

\end{proof}

\indent In the following claims, the surface $\Sigma\subset\Omega$ that we are considering is homeomorphic to $\mathbb{T}^2\#\mathbb{T}^2\setminus {D}$ and $\gamma_1:[0,1]\to \Sigma$ denotes the smooth, non-separating Jordan curve in Figure 2 . We will prove that a surface with the generalised simple lift property cannot contain an open subset homeomorphic to a double torus minus a disk by proving that $\gamma_1$ cannot have either a closed or an open lift in an embedded disk $\Delta$ for a specific choice of five non-separating Jordan curves $\gamma_1,\gamma_2,\gamma_3,\gamma_4,\gamma_5$ (see Figure 3).

\begin{center}
\begin{tikzpicture}[scale=0.9]
\node at (0,-2.5) {Figure 2: $\gamma_1$};
\draw[xscale=0.8] (-5.5,0) to[out=90,in=180] (-2.75,1.5) to[out=0,in=180] (0,1.25) to[out=0,in=180] (2.75,1.5) to[out=0,in=90] (5.5,0) ;
\draw[rotate=180,xscale=0.8] (-5.5,0) to[out=90,in=180] (-2.75,1.5) to[out=0,in=180] (0,1.25) to[out=0,in=180] (2.75,1.5) to[out=0,in=90] (5.5,0) ;

\draw[xshift=-0.3cm,yshift=-0.1cm,xscale=0.8]  (-3.3,0.1) to[out=65,in=180] (-2.5,0.5) to[out=0,in=115] (-1.7,0.1);
\draw[xshift=-0.3cm,yshift=-0.1cm,xscale=0.8]  (-3.4,0.3) to[out=-75,in=180] (-2.5,-0.2) to[out=0,in=-105] (-1.6,0.3);
\draw[xshift=4.3cm,yshift=-0.1cm,xscale=0.8]  (-3.3,0.1) to[out=65,in=180] (-2.5,0.5) to[out=0,in=115] (-1.7,0.1);
\draw[xshift=4.3cm,yshift=-0.1cm,xscale=0.8]  (-3.4,0.3) to[out=-75,in=180] (-2.5,-0.2) to[out=0,in=-105] (-1.6,0.3);

\draw[red,thick,dashed] (-4.4,0) to[out=-90,in=180] (-3.8,-0.4) to[out=0,in=-90] (-3,0);
\draw[red,thick] (-4.4,0) to[out=90,in=180] (-3.8,0.4) to[out=0,in=90] (-3,0);
\draw[black] (3.8,0) to[out=-90,in=180] (4.2,-0.8) ;
\draw[black] (3.8,0) to[out=90,in=180] (4.2,0.8);

\end{tikzpicture}

\end{center}

\begin{cl}\label{noclosedlift}
Let $\Sigma\subset\Omega$ be an embedded surface homeomorphic to $\mathbb{T}^2\#\mathbb{T}^2\setminus D$ with the generalised simple lift property, and let us take into consideration the five smooth, non-separating Jordan curves $\gamma_1,\gamma_2,\gamma_3,\gamma_4,\gamma_5\colon[0,1]\to\Sigma$ given in Figure 3. Then $\gamma_1$ does not admit a closed $\delta$-lift on the embedded disk $\Delta=\Delta(\gamma_1,\gamma_2,\gamma_3,\gamma_4,\gamma_5)$ given by Definition \ref{genliftprop}, for any $\delta >0$ and for any $U\subset\Sigma$ pre-compact open set that contains the curves.
\end{cl}

\begin{center}
\begin{tikzpicture}[scale=1.4]
\node at (0,-2.5) {Figure 3: the five loops taken into consideration};
\draw[xscale=0.8] (-5.5,0) to[out=90,in=180] (-2.75,1.5) to[out=0,in=180] (0,1.25) to[out=0,in=180] (2.75,1.5) to[out=0,in=90] (5.5,0) ;
\draw[rotate=180,xscale=0.8] (-5.5,0) to[out=90,in=180] (-2.75,1.5) to[out=0,in=180] (0,1.25) to[out=0,in=180] (2.75,1.5) to[out=0,in=90] (5.5,0) ;

\draw[xshift=-0.3cm,yshift=-0.1cm,xscale=0.8]  (-3.3,0.1) to[out=65,in=180] (-2.5,0.5) to[out=0,in=115] (-1.7,0.1);
\draw[xshift=-0.3cm,yshift=-0.1cm,xscale=0.8]  (-3.4,0.3) to[out=-75,in=180] (-2.5,-0.2) to[out=0,in=-105] (-1.6,0.3);
\draw[xshift=4.3cm,yshift=-0.1cm,xscale=0.8]  (-3.3,0.1) to[out=65,in=180] (-2.5,0.5) to[out=0,in=115] (-1.7,0.1);
\draw[xshift=4.3cm,yshift=-0.1cm,xscale=0.8]  (-3.4,0.3) to[out=-75,in=180] (-2.5,-0.2) to[out=0,in=-105] (-1.6,0.3);

\draw[blue,thick] (-1.7,0) to[out=90,in=180] (0,0.5) to[out=0,in=90] (1.7,0);
\node at (0,.7) [label=center:\textcolor{blue}{$\gamma_3$}]{};
\draw[blue,thick,dashed] (-1.65,0) to[out=-90,in=180] (0,-0.5) to[out=0,in=-90] (1.65,0);
\draw[red,thick,dashed] (-4.4,0) to[out=-90,in=180] (-3.8,-0.4) to[out=0,in=-90] (-3,0);
\node at (-4.7,0) [label=center:\textcolor{red}{$\gamma_1$}]{};
\draw[red,thick] (-4.4,0) to[out=90,in=180] (-3.8,0.4) to[out=0,in=90] (-3,0);
\draw[black] (3.8,0) to[out=-90,in=180] (4.2,-0.8) ;
\draw[black] (3.8,0) to[out=90,in=180] (4.2,0.8);
\draw[yellow,thick] (-3.3,0) to[out=90,in=180] (-2.3,0.8) to[out=0,in=90] (-1.3,0);
\node at (-2.3,1) [label=center:\textcolor{yellow}{$\gamma_2$}]{};

\draw[yellow,thick] (-3.3,0) to[out=-90,in=180] (-2.3,-0.7) to[out=0,in=-90] (-1.3,0);
\draw[green, thick, dashed] (-2.6,-1) to[out=-90,in=180] (-2.3,-1.5) ;
 \draw[green, thick, dashed] (-2.6,-1) to[out=90,in=180] (-2.3,-0.3);
 \draw[green,thick] (-2.3,-1.5) to [out=-90,in=30] (-1.9,-1.3);
 \draw[green,thick] (-2.3,-0.3) to [out=30, in=180](-1.9,-0.8);

\draw[green,thick] (-1.9,-0.8) to [in=180,out=0] (2.3,1)to[out=0,in=90] (3.6,0);
\draw[green,thick] (-1.9,-1.3) to[out=32,in=180] (2.3,-1.1) to[out=0,in=-90] (3.6,0);
\node at (0,-1.05) [label=center:\textcolor{green}{$\gamma_4$}]{};
\draw[purple,thick] (1.3,0) to[out=90,in=180] (2.3,0.8) to[out=0,in=90] (3.3,0);
\node at (2.1,-0.85) [label=center:\textcolor{purple}{$\gamma_5$}]{};

\draw[purple,thick] (1.3,0) to[out=-90,in=180] (2.3,-0.7) to[out=0,in=-90] (3.3,0);
\end{tikzpicture}
\end{center}
\begin{proof}

Given the five smooth Jordan curves $\gamma_1,\gamma_2,\gamma_3,\gamma_4,\gamma_5\colon [0,1]\to\Sigma$ pictured in Figure 3, for an arbitrary $\delta>0$ and for an arbitrary pre-compact open set $U\subset\Sigma$ that contains these five curves $\gamma_i$, we are considering the embedded disk $\Delta=\Delta(\gamma_1,\gamma_2,\gamma_3,\gamma_4,\gamma_5)$ for which the connected component of $\Delta\cap\mathcal{N}_\epsilon(U)$ that contains the $\delta$-lifts $\lift{\gamma_i}$ is a $\delta$-cover of $U$ (see Definition \ref{genliftprop}).

Arguing by contradiction, let us assume that $\gamma_1$ admits a closed $\delta$-lift on such a disk $\Delta=\Delta(\gamma_1,\gamma_2,\gamma_3,\gamma_4,\gamma_5)$.

By Lemma \ref{no_two_closed}, we already know that $\gamma_2$ cannot admit a closed $\delta$-lift on this disk, since $\gamma_1$ and $\gamma_2$ intersect transversally at a single point, and $\gamma_1$ has a closed lift on $\Delta$ by assumption.

Let us now consider the third curve $\gamma_3\colon[0,1]\to\Sigma$.

If $\gamma_3$ admitted an open lift on $\Delta$ then we would have two Jordan curves intersecting transversally at a single point $\lbrace p\rbrace=\gamma_2\cap\gamma_3$, and both admit an open $\delta$-lift on $\Delta=\Delta(\gamma_1,\gamma_2,\gamma_3,\gamma_4,\gamma_5)$. Moreover, we can take as a two-sided pre-compact subset that contains $\gamma_2$ and $\gamma_3$ the original subset $U\subset\Sigma$ used to construct $\Delta$. By the lifting lemma (Proposition \ref{Lifting Lemma}), we know that the curve $\alpha:=\gamma_2\circ\gamma_3\circ\gamma_2^{-1}\circ\gamma_3^{-1}$ admits a closed $\delta$-lift on $\Delta=\Delta(\gamma_1,\gamma_2,\gamma_3,\gamma_4,\gamma_5)$, and that one of the curves $\alpha,\gamma_2\circ\gamma_3$ and $\gamma_3\circ\gamma_2^{-1}$ has an embedded closed lift on $\Delta$.

If either $\gamma_2\circ\gamma_3$ or $\gamma_3\circ\gamma_2^{-1}$ has an embedded closed lift on $\Delta$, then we reach a contradiction by applying the same reasoning used in Lemma \ref{no_two_closed}, since $\gamma_1$ (which we are assuming has a closed lift on $\Delta$) and the given curve intersect transversally in a single point: $\lbrace p\rbrace=\gamma_1\cap\gamma_2\circ\gamma_3$ or $\lbrace p\rbrace=\gamma_1\cap\gamma_3\circ\gamma_2^{-1}$.

If instead $\alpha=\gamma_2\circ\gamma_3\circ\gamma_2^{-1}\circ\gamma_3^{-1}$ admits an embedded closed lift, then one can find three values $t_1,t_2,t_3\in (0,1)$ for which $p=\gamma_1(t_1)=\gamma_2(t_2)=\gamma_2^{-1}(t_3)$. Following the construction of the lifting lemma, we consider the two-sided subset $U\subset\Sigma$ which in particular contains $\gamma_1,\gamma_2$ and $\gamma_3$, and pick a small simply-connected neighbourhood $V$ of $p$ contained in $U$, so that we can construct a family of parallel components of the lifts of $V$ that can be ordered by height: $\Pi_\Sigma^{-1}(V)\cap\Delta=\lbrace \lift{V}(1),\ldots,\lift{V}(n)\rbrace$. We can now consider the generalised simple $\delta$-lift $\lift{\gamma_1}\cup\lift{\alpha}$ of $\gamma_1\cup\alpha$ pointed at $(p,\lift{p})$, where $\lift{p}=\lift{\gamma_1(t_1)}=\lift{\gamma_2(t_2)}\in\mathcal{N}_\epsilon(p)\cap\Delta(\gamma_1,\gamma_2,\gamma_3,\gamma_4,\gamma_5)$. 

The fact that $\lift{p}$ is the only point of intersection results from the following remark. $\lift{\gamma_1}$ is indeed a one-cover of $\gamma_1$, while on the other hand $\lift{\gamma_2^{-1}(t_3)}\in\lift{\alpha}$ belongs to a components of $\Pi_\Sigma^{-1}(V)\cap\Delta$ that is different to that of $\lift{p}=\lift{\gamma_1(t_1)}=\lift{\gamma_2(t_2)}$. In fact, if we denote by $\lift{V}(l_1)$ the component of $\Pi_\Sigma^{-1}(V)\cap\Delta$ that contains $\lift{p}$, we have that the component that contains $\lift{\gamma_2^{-1}(t_3)}$ will have height $l_2$ given by:
\begin{align*}
    l_2=l_1+m[\gamma_3]\neq l_1
\end{align*}
since $\lift{\gamma_3}$ is an open lift on $\Delta(\gamma_1,\gamma_2,\gamma_3,\gamma_4,\gamma_5)$.

Therefore, we constructed two closed curves $\lift{\gamma_1}$ and $\lift{\alpha}$ which intersect transversally on the disk $\Delta=\Delta(\gamma_1,\gamma_2,\gamma_3,\gamma_4,\gamma_5)$ in a single point $\lift{p}$, which represents a contradiction to the mod 2 degree theorem applied to the Jordan Brouwer separation theorem.

Therefore, the $\gamma_3$ must have a closed $\delta$-lift on $\Delta=\Delta(\gamma_1,\gamma_2,\gamma_3,\gamma_4,\gamma_5)$.

Let us now take into consideration the loop $\gamma_4$ which intersects $\gamma_2$ transversally in one single point. Arguing like before, we obtain that $\gamma_4$ will have a closed $\delta$-lift on $\Delta=\Delta(\gamma_1,\gamma_2,\gamma_3,\gamma_4,\gamma_5)$ too.

We have then constructed two smooth non-separating Jordan curves $\gamma_3$ and $\gamma_4$ that intersect transversally in one single point and both have a closed $\delta$-lift on the disk $\Delta$. By Lemma \ref{no_two_closed}, this represents a contradiction to the Jordan Brouwer separation theorem.

This implies that the initial curve $\gamma_1$ cannot have a closed $\delta$-lift on the embedded disk $\Delta=\Delta(\gamma_1,\gamma_2,\gamma_3,\gamma_4,\gamma_5)$. 
\end{proof}
\begin{cl}\label{noopenlift} Let $\Sigma\subset\Omega$ be an embedded surface homeomorphic to $\mathbb{T}^2\#\mathbb{T}^2\setminus D$ with the generalised simple lift property, and let us take into consideration the five smooth, non-separating Jordan curves $\gamma_1,\gamma_2,\gamma_3,\gamma_4,\gamma_5\colon[0,1]\to\Sigma$ given in Figure 3. Then $\gamma_1$ does not admit an open $\delta$-lift on the embedded disk $\Delta=\Delta(\gamma_1,\gamma_2,\gamma_3,\gamma_4,\gamma_5)$ given by Definition \ref{genliftprop}, for any $\delta >0$ and for any $U\subset\Sigma$ pre-compact open set that contains the curves.
\end{cl}
\begin{proof}

Just like in the previous claim, we are working with the same five curves in Figure 3, and for any arbitrary $\delta>0$ and for an arbitrary pre-compact open set $U\subset\Sigma$ that contains these curves $\gamma_i$ we are considering the embedded disk $\Delta=\Delta(\gamma_1,\gamma_2,\gamma_3,\gamma_4,\gamma_5)$ given by Definition \ref{genliftprop}, for which the connected component of $\Delta\cap\mathcal{N}_{\epsilon}(U)$ that contains the $\delta$-lifts $\lift{\gamma_i}$ is a $\delta$-cover of $U$.

Arguing by contradiction like before, we will now assume that $\gamma_1$ admits an open $\delta$-lift on such a disk $\Delta=\Delta(\gamma_1,\gamma_2,\gamma_3,\gamma_4,\gamma_5)$.

Let us consider the curve $\gamma_2\colon[0,1]\to\Sigma$ which intersects $\gamma_1$ transversally in a single point. We will be considering the two cases of $\gamma_2$ admitting either an open or a closed $\delta$-lift on $\Delta=\Delta(\gamma_1,\gamma_2,\gamma_3,\gamma_4,\gamma_5)$, and we will prove that both of them yield a contradiction.

Let us first assume $\gamma_2$ admits a closed $\delta$-lift on $\Delta=\Delta(\gamma_1,\gamma_2,\gamma_3,\gamma_4,\gamma_5)$. By Lemma \ref{no_two_closed}, this implies that both $\gamma_3,\gamma_4\colon[0,1]\to\Sigma$ will have an open $\delta$-lift on $\Delta$, since each one of them intersects transversally $\gamma_2$ in a single point. 

Let us then consider $\gamma_5\colon[0,1]\to\Sigma$, which intersects $\gamma_3$ transversally in a single point. We will see that $\gamma_5$ cannot admit either an open or a closed $\delta$-lift on $\Delta=\Delta(\gamma_1,\gamma_2,\gamma_3,\gamma_4,\gamma_5)$.

If $\gamma_5$ admitted an open lift, the curves $\gamma_3$ and $\gamma_5$ would satisfy the hypotheses of the lifting lemma, and we could then apply the same argument as in the previous claim to the two curves $\gamma_2$ and $\gamma_3\circ\gamma_5\circ\gamma_3^{-1}\circ\gamma_5^{-1}$, both of which have a closed lift on $\Delta$ and intersect transversally at a single point, hence obtaining a contradiction.

If instead $\gamma_5$ admitted a closed lift, then the two curves $\gamma_3$ and $\gamma_4$ would satisfy the hypotheses of the lifting lemma, and we could apply still the same argument as in Claim \ref{noclosedlift} to the two curves $\gamma_5$ and $\gamma_3\circ\gamma_4\circ\gamma_3^{-1}\circ\gamma_4^{-1}$, both of which have a closed lift on $\Delta$ and intersect transversally at a single point, and hence obtain a contradiction. 

These arguments imply that such a curve $\gamma_5$ cannot have either an open or a closed lift on $\Delta$, which is a contradiction, meaning that $\gamma_2$ cannot admit a closed lift.

Let us now study the case where this curve $\gamma_2\colon[0,1]\to\Sigma$ admits an open $\delta$-lift on $\Delta=\Delta(\gamma_1,\gamma_2,\gamma_3,\gamma_4,\gamma_5)$.

This implies that $\gamma_3\colon [0,1]\to\Sigma$ cannot have a closed lift on $\Delta$. In fact, if it did, we would be able to apply the lifting lemma to $\gamma_1$ and $\gamma_2$, and following the same reasoning as in Claim \ref{noclosedlift} to the curves $\gamma_3$ and $\gamma_1\circ\gamma_2\circ\gamma_1^{-1}\circ\gamma_2^{-1}$, we would obtain a contradiction.

Moreover, the fact that $\gamma_3\colon [0,1]\to\Sigma$ admits an open lift on $\Delta$ also implies that $\gamma_5\colon[0,1]\to\Sigma$ has an open lift on $\Delta$. This simply follows from the fact that we would be able to apply the lifting lemma to the two curves $\gamma_2$ and $\gamma_3$. So if $\gamma_5$ had a closed lift on $\Delta$, we would obtain two curves $\gamma_2\circ\gamma_3\circ\gamma_2^{-1}\circ\gamma^{-1}_3$ and $\gamma_5$ intersecting transversally at one single point, both admitting a closed $\delta$-lift on $\Delta$, which is a contradiction as already shown in the previous claim.

We are therefore left to study the scenario where the four curves $\gamma_1,\gamma_2,\gamma_3$ and $\gamma_5$ all have an open lift on $\Delta=\Delta(\gamma_1,\gamma_2,\gamma_3,\gamma_4,\gamma_5)$.

By applying the lifting lemma to the pairs of curves $\lbrace \gamma_1,\,\gamma_2\rbrace$ and $\lbrace \gamma_3,\,\gamma_5\rbrace$ respectively, we obtain two curves with a closed lift on $\Delta=\Delta(\gamma_1,\gamma_2,\gamma_3,\gamma_4,\gamma_5)$, namely $\alpha:=\gamma_1\circ \gamma_2\circ \gamma_1^{-1}\circ \gamma_2^{-1}$ and $\beta:=\gamma_3\circ \gamma_5\circ \gamma_3^{-1}\circ \gamma_5^{-1}$.

As already pointed out in the previous claim, considering the first couple of curves $\lbrace \gamma_1,\gamma_2\rbrace$, one of the curves $\alpha$, $\gamma_1\circ \gamma_2$ and $\gamma_2\circ \gamma_1^{-1}$ has an embedded closed lift, and likewise for the other couple $\lbrace \gamma_3,\gamma_5\rbrace$. In this proof, we will take into consideration only the most complicated case where $\alpha$ and $\beta$ are the loops with the embedded closed lift on $\Delta$. One should argue just like in Claim \ref{noclosedlift} for the other cases.

Let us take into consideration the point of intersection $\lbrace p\rbrace=\alpha\cap\beta$. One should notice that there exist four values $t_1,t_2,t_3,t_4\in (0,1)$ such that
\begin{align*}
p=\gamma_2(t_1)=\gamma_2^{-1}(t_2)=\gamma_3(t_3)=\gamma_3^{-1}(t_4)\,.
\end{align*}

Let us now take as a two-sided pre-compact open set $U\subset \Sigma$ that contains both curves $\alpha$ and $\beta$, the original subset $U\subset\Sigma$ used to construct $\Delta=\Delta(\gamma_1,\gamma_2,\gamma_3,\gamma_4,\gamma_5)$. Following the construction in the lifting lemma, we can pick a small simply-connected neighbourhood $V\subset U$ of $p$, so that we can produce a family of parallel components on $\Delta$ of the lifts of $V$ that can be ordered by height: $\Pi_\Sigma^{-1}(V)\cap\Delta=\lbrace\lift{V}(1),\ldots,\lift{V}(n)\rbrace$.

All the curves $\lift{\gamma_i}$ are the open lifts, so that - still following the notation of the lifting lemma - $m[\gamma_j]\neq 0\;\forall\,j=1,\ldots,4\,$, which means that there are two cases: either $m[\gamma_1]\cdot m[\gamma_5]>0$, or  $m[\gamma_1]\cdot m[\gamma_5]<0$.

In the first case, we will consider the generalised simple lift $\lift{\alpha}\cup\lift{\beta}$ based at $(\gamma_2(t_1)=\gamma_3(t_3),\lift{p})$ on the disk $\Delta$, which means there exists at least a point $\lift{p}\in\mathcal{N}_{\epsilon}(p)\cap\Delta$ such that $\lift{p}\in\lift{\gamma_2}\cap\lift{\gamma_3}$.

We are left to prove that this point $\lift{p}$ is the only point of intersection between $\lift{\alpha}$ and $\lift{\beta}$. By construction, $\lift{\gamma_2(t_1)}$ and $\lift{\gamma_3(t_3)}$ belong to the same component of $\Pi_\Sigma^{-1}(V)\cap\Delta$, namely $\lift{V}(l_1)$. The other two points $\lift{\gamma_2^{-1}(t_2)}$ and $\lift{\gamma^{-1}_3(t_4)}$ will then belong to two other components $\lift{V}(l_2)$ and $\lift{V}(l_3)$ respectively. Moreover, since the number of components between $\lift{\alpha}[k](0)$ and $\lift{\alpha}[k](t)$ does not depend on $k$, we have that the heights of these two components will be:
\begin{align*}
l_2&=l_1-m[\gamma_1]\,,\\
l_3&=l_1+m[\gamma_5]\,.
\end{align*}

Hence $l_3-l_2=m[\gamma_1]+m[\gamma_5]\neq 0\,$, since we assumed $m[\gamma_1]\cdot m[\gamma_5]>0$, which means that $\lift{p}$ is indeed the only point of intersection between the two closed lifts $\lift{\alpha}$ and $\lift{\beta}$, which is a contradiction.

In the second case, where $m[\gamma_1]\cdot m[\gamma_5]<0$, we can repeat the same argument as before, applying it to the generalised simple lift of $\lift{\alpha}\cup\lift{\beta}$ based at $((\gamma_2)(t_1)=\gamma_3^{-1}(t_4),\lift{p})$ instead. By using the same notation as the first case, $\lift{\gamma_2(t_1)}$ and $\lift{\gamma_3^{-1}(t_4)}$ will belong to the same component of $\Pi_\Sigma^{-1}(V)\cap\Delta$, $\lift{V}(l_1)$. The other two points $\lift{\gamma_2^{-1}(t_2)}$ and $\lift{\gamma_3(t_3)}$ will then belong to the components $\lift{V}(l_2)$ and $\lift{V}(l_3)$ respectively. Therefore, the heights of these two components will be given by:
\begin{align*}
l_2&=l_1-m[\gamma_1]\,,\\
l_3&=l_1-m[\gamma_5]\,.
\end{align*}

Therefore $l_3-l_2=m[\gamma_1]-m[\gamma_5]\neq 0\,$, since we assumed $m[\gamma_1]\cdot m[\gamma_5]<0$, which means that $\lift{p}$ is indeed the only point of intersection between the closed lifts $\lift{\alpha}$ and $\lift{\beta}$, so we obtain another contradiction.

Hence we have proved that the curve $\gamma_2\colon [0,1]\to\Sigma$ cannot have either an open or a closed lift on the disk $\Delta$, which is a contradiction, implying that $\gamma_1$ cannot admit an open $\delta$-lift on $\Delta=\Delta(\gamma_1,\gamma_2,\gamma_3,\gamma_4,\gamma_5)$.
\end{proof}
\indent From these claims, we obtain the following result.
\begin{prop}\label{orientableprop}
Given an embedded surface $\Sigma\subset\Omega$ with the generalised simple lift property, $\Sigma$ cannot contain an open subset that is homeomorphic $\mathbb{T}^2\#\mathbb{T}^2\setminus {D}$.
\end{prop}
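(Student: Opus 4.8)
The plan is to derive the statement by contradiction, combining the two preceding claims with the dichotomy that is built into the definition of the lift property. Assume that $\Sigma$ contains an open subset $\Sigma_0$ homeomorphic to $\mathbb{T}^2\#\mathbb{T}^2\setminus D$. Inside $\Sigma_0$ one can draw the smooth non-separating Jordan curve $\gamma_1$ of Figure 2, and more generally every curve appearing in Claims \ref{noclosedlift} and \ref{noopenlift} can be realised inside $\Sigma_0$. Now $\gamma_1$ is a smooth closed curve in a surface carrying the generalised simple lift property, so by the very definition of the open and closed lift property it must satisfy exactly one of them: either there is a $\delta_0>0$ below which no closed generalised simple $\delta$-lift exists (open lift property), or there is not (closed lift property). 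Claim \ref{noclosedlift} excludes the closed lift property and Claim \ref{noopenlift} excludes the open lift property, so the assumption that $\Sigma_0$ exists is untenable.

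The step that genuinely needs justification — and the one I regard as the main obstacle — is that the two claims were established under the standing hypothesis that $\Sigma$ is itself homeomorphic to $\mathbb{T}^2\#\mathbb{T}^2\setminus D$, whereas here that homeomorphism type is only realised on the open subset $\Sigma_0$. The remedy is to observe that neither lift property, nor any construction inside the proofs of the claims, is a global feature of $\Sigma$: all of them are produced by the generalised simple lift property, which is phrased in terms of finite families of curves lying in a pre-compact open subset of $\Sigma$. Since $\gamma_1,\ldots,\gamma_4$ are compact and contained in $\Sigma_0$, and since $\Sigma_0$ is orientable and hence two-sided, I would fix a two-sided pre-compact open set $U$ with $\gamma_1\cup\cdots\cup\gamma_4\subset U\subset\Sigma_0\subset\Sigma$. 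The generalised simple lift property of the ambient surface $\Sigma$ then delivers, for this $U$, the embedded disks $\Delta$, the $\delta$-lifts, and the ordered sheets $\lift{V}(1),\ldots,\lift{V}(n)$ precisely as required in the arguments of the two claims.

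With such a $U$ chosen, I expect the proofs of Claims \ref{noclosedlift} and \ref{noopenlift} to transfer verbatim, because their only inputs are the intersection pattern of the four curves, which is intrinsic to $\Sigma_0$, and Lemma \ref{no_two_closed}, whose contradiction is extracted entirely on the embedded disk $\Delta$ through the Jordan--Brouwer separation theorem. In particular I would record that the non-separating hypothesis feeding Lemma \ref{no_two_closed} holds inside $\Sigma_0$, where $\gamma_1,\ldots,\gamma_4$ are manifestly non-separating, and that non-separation in the larger surface $\Sigma$ is never actually used, since the decisive mod $2$ intersection argument takes place on $\Delta$. Assembling this dichotomy with the localisation remarks above yields the contradiction, and hence the proposition.
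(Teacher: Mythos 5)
Your proposal is correct and follows essentially the same route as the paper: the paper's proof simply invokes Claims \ref{noclosedlift} and \ref{noopenlift}, which together contradict the fact that $\gamma_1$ must have either the open or the closed lift property, exactly the dichotomy you articulate. Your additional remarks on localising the claims to the open subset $\Sigma_0$ via a pre-compact two-sided $U\subset\Sigma_0$ merely make explicit what the paper leaves implicit (its claims are stated for $\Sigma$ itself homeomorphic to $\mathbb{T}^2\#\mathbb{T}^2\setminus D$, though the stated aim is the subset version), and they are accurate since the generalised simple lift property is formulated entirely in terms of curves lying in pre-compact open subsets.
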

\begin{proof}
The result follows directly from Claims \ref{noclosedlift} and \ref{noopenlift}.
\end{proof}

By the classification of compact surfaces, we have that orientable surfaces are homeomorphic to $\mathbb{S}^2$ or the connected sum of $n$ tori, $\mathbb{T}^2_n$, while non-orientable compact surfaces are homeomorphic to the connected sum of $n$ projective planes, $\mathbb{R}P^2_n$. Moreover, one should notice that in the non-orientable case we have the following homeomorphisms:
\begin{itemize}
\item $\mathbb{R}P^2_2\cong K$  where $K$ is the Klein bottle, and 
\item $\mathbb{R}P^2_3\cong\mathbb{T}^2\#\mathbb{R}P^2\cong K\#\mathbb{R}P^2$;
\end{itemize}
which means that $\mathbb{R}P^2_{2k}\cong \mathbb{T}^2_{k-1}\#K$ and $\mathbb{R}P^2_{2k+1}\cong\mathbb{T}^2_k\#\mathbb{R}P^2$.

Proposition \ref{orientableprop} then gives the following result.

\begin{cor}\label{FirstResult}
If $\Sigma\subset\Omega$ is an embedded compact surface and has the generalised simple lift property, then it must be topologically $\mathbb{S}^2$, $\mathbb{T}^2$, $\mathbb{R}P^2$,  $\mathbb{R}P^2_2\cong K$, 
 $\mathbb{R}P^2_3\cong\mathbb{T}^2\#\mathbb{R}P^2$ or $\mathbb{R}P^2_4\cong \mathbb{T}^2\#K$.


\end{cor}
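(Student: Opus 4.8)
The plan is to derive the corollary as a bookkeeping consequence of Proposition \ref{orientableprop} together with the classification of compact surfaces, the whole argument resting on one elementary observation: if a closed surface $\Sigma$ admits a connected-sum decomposition $\Sigma\cong(\mathbb{T}^2\#\mathbb{T}^2)\#B$ for some closed surface $B$, then $\Sigma$ contains an open subset homeomorphic to $\mathbb{T}^2\#\mathbb{T}^2\setminus {D}$. Indeed, realising the connected sum as $(\mathbb{T}^2\#\mathbb{T}^2)$ with an open disk removed, glued along its boundary circle to $B$ with an open disk removed, the first piece together with a collar is exactly such an open subset. By Proposition \ref{orientableprop}, no surface admitting such a splitting can carry the generalised simple lift property, so the task reduces to identifying which compact surfaces split off a factor $\mathbb{T}^2\#\mathbb{T}^2$.

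First I would dispatch the orientable case. By classification an orientable compact $\Sigma$ is either $\mathbb{S}^2$ or $\mathbb{T}^2_n$ with $n\geq 1$. For $n\geq 2$ one writes $\mathbb{T}^2_n\cong(\mathbb{T}^2\#\mathbb{T}^2)\#\mathbb{T}^2_{n-2}$, so by the observation $\Sigma$ contains $\mathbb{T}^2\#\mathbb{T}^2\setminus {D}$ and is excluded. This leaves only $\mathbb{S}^2$ and $\mathbb{T}^2=\mathbb{T}^2_1$ in the orientable case.

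Next I would treat the non-orientable case using the two parity-dependent homeomorphisms recorded just before the statement, namely $\mathbb{R}P^2_{2k}\cong\mathbb{T}^2_{k-1}\#K$ and $\mathbb{R}P^2_{2k+1}\cong\mathbb{T}^2_{k}\#\mathbb{R}P^2$. For $n\geq 5$ each of these presentations carries at least two torus summands: if $n=2k$ with $k\geq 3$ then $k-1\geq 2$, and if $n=2k+1$ with $k\geq 2$ then $k\geq 2$. In either situation one extracts a factor $\mathbb{T}^2\#\mathbb{T}^2$, so $\Sigma$ again contains $\mathbb{T}^2\#\mathbb{T}^2\setminus {D}$ and is excluded by Proposition \ref{orientableprop}. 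This leaves precisely $n\in\{1,2,3,4\}$, that is $\mathbb{R}P^2$, $\mathbb{R}P^2_2\cong K$, $\mathbb{R}P^2_3\cong\mathbb{T}^2\#\mathbb{R}P^2$ and $\mathbb{R}P^2_4\cong\mathbb{T}^2\#K$, completing the list.

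The genuine content has already been carried out in Claims \ref{noclosedlift} and \ref{noopenlift} and packaged into Proposition \ref{orientableprop}, so what remains is purely combinatorial; the one point demanding care is that $\mathbb{T}^2\#\mathbb{T}^2\setminus {D}$ is orientable yet must be located inside the non-orientable surfaces as well. This is not an obstruction, since the two homeomorphisms above isolate the orientable genus into explicit torus summands, and it is exactly two such summands — rather than the $K$ or $\mathbb{R}P^2$ part — that furnish the required orientable genus-two subsurface. For the stated inclusion nothing further is needed, as the corollary only asserts that every surface with the property lies in the displayed list. If one additionally wishes to confirm that the list is sharp, it suffices to observe that each of the six surfaces satisfies $\chi\geq -2$, so that capping off the boundary of a hypothetical copy of $\mathbb{T}^2\#\mathbb{T}^2\setminus {D}$ with a disk would force an orientable closed surface of Euler characteristic $-2$, namely $\mathbb{T}^2\#\mathbb{T}^2$; comparing orientability and Euler characteristic then rules this out in each case.
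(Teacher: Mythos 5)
Your proposal is correct and takes essentially the same route as the paper: the corollary is deduced from Proposition~\ref{orientableprop} together with the classification of compact surfaces and the homeomorphisms $\mathbb{R}P^2_{2k}\cong\mathbb{T}^2_{k-1}\#K$ and $\mathbb{R}P^2_{2k+1}\cong\mathbb{T}^2_{k}\#\mathbb{R}P^2$, and your write-up merely makes explicit the bookkeeping (splitting off a $\mathbb{T}^2\#\mathbb{T}^2$ summand whenever $n\geq 2$ in the orientable case or $n\geq 5$ in the non-orientable case) that the paper leaves implicit. Your closing ``sharpness'' paragraph is superfluous, since, as you note, the corollary only asserts containment in the list, not that every listed surface actually has the property.
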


\section{Minimal laminations}

Let us now apply the results of the previous section to the case of minimal laminations. Let us first recall some facts about laminations.
\begin{defin} A subset $\mathcal{L}\subset\Omega$ is a \textit{smooth lamination} if for each $p\in\mathcal{L}$, there is a radius $r_p>0$, maps $\phi_p,\psi_p:\mathcal{B}_{r_p}(p)\to \mathbb{B}_1(0)\subset\mathbb{R}^3$ and a closed set $T_p\subset(-1,1)$ with $0\in T_p$ such that:
\begin{itemize}
\item[1)] $\phi_p(p)=\psi(p)=\mathbf{0}$;
\item[2)] $\phi_p$ is a smooth diffeomorphism and $\mathbb{D}_1(0)\subset\phi_p\big(\mathcal{L}\cap\mathcal{B}_{r_p}(p)\big)$;
\item[3)] $\psi_p$ is a Lipschitz diffeomorphism and $\mathbb{B}_1(0)\cap\lbrace x_3=t\rbrace_{t\in T_p}=\psi_p(\mathcal{L}\cap\mathcal{B}_{r_p}(p))$;
\item[4)] $\phi_p^{-1}(\mathbb{D}_1(0))=\psi^{-1}_p(\mathbb{D}_1(0))\,.$
\end{itemize}

We refer to maps $\phi_p$ satisfying properties 1) and 2) as \textit{smoothing maps} of $\mathcal{L}$ and to maps $\psi_p$ satisfying properties 1) and 3) as \textit{straightening maps} of $\mathcal{L}$.
\end{defin}

A smooth lamination $\mathcal{L}\subset\Omega$ is \textit{proper} in $\Omega$ if it is closed, that is $\overline{\mathcal{L}}=\mathcal{L}$. Any embedded smooth surface is a smooth lamination that is proper if and only if the surface is proper.
\begin{defin}
Let $\mathcal{L}\subset\Omega$ be a non-empty smooth lamination. A subset $L\subset\mathcal{L}$ is a \textit{leaf of }$\mathcal{L}$ if $L$ is a connected, embedded surface and for any $p\in L\,,\exists\, r_p>0$ and a smoothing map $\phi_p$ so that $\mathbb{D}_1=\phi_p(L\cap\mathcal{B}_{r_p}(p))\,.$ For each $p\in\mathcal{L}$, we will denote by $L_p$ the unique leaf of $\mathcal{L}$ containing $p$.

A smooth lamination $\mathcal{L}$ is a \textit{minimal lamination} if each one of its leaves is minimal.
\end{defin}

The following is the natural compactness result for sequences of properly embedded minimal surfaces with uniformly bounded second fundamental form (see for instance Appendix B in \cite{cm24} for a proof).
\begin{theorem}
\label{compactness_result_appendixB}
Let $\lbrace\Sigma_i\rbrace_{i\in\mathbb{N}}$ be a sequence of smooth minimal surfaces, properly embedded in $\Omega$. If for each compact subset  $U\subset\Omega$ there is a constant $C(U)<\infty$ so that
\begin{align*}
\sup_{U\cap\Sigma_i}\vert A_{\Sigma_i}\vert\le C(U)\,,
\end{align*}
then, $\forall\,\alpha\in(0,1)$, up to passing to a subsequence, the $\Sigma_i$s converge in $C^{\infty,\alpha}_{loc}(\Omega)$ to $\mathcal{L}$, a smooth proper minimal lamination in $\Omega$.
\end{theorem}
\begin{oss}
While the straightening maps converge in $C^\alpha$, their Lipschitz norms are uniformly bounded on compact subsets of $\Omega$. This follows from the Harnack inequality and is used in the proof of Theorem \ref{compactness_result_appendixB} (see Appendix B of \cite{cm24} and Theorem 1.1 in \cite{sol1}).
\end{oss}

In view of the result in Theorem \ref{compactness_result_appendixB}, one can define the so-called \textit{singular points} of a sequence $\mathcal{S}:=\lbrace\Sigma_i\rbrace_{i\in\mathbb{N}}$ of properly embedded smooth minimal surfaces $\Sigma_i$.
\begin{defin}
Given the sequence $\mathcal{S}=\lbrace\Sigma_i\rbrace$, we define the \textit{regular} points to be the set of points
\begin{align*}
\mathrm{reg}(\mathcal{S}):=\bigg\lbrace p\in\Omega\mid\exists\,\rho>0\text{ such that } \limsup_{i\to\infty}\sup_{B_\rho(p)\cap\Sigma_i}\vert A_{\Sigma_i}\vert <\infty\bigg\rbrace
\end{align*}
and the \textit{singular} points of $\mathcal{S}$ to be the set
\begin{align*}
\mathrm{sing}(\mathcal{S}):=\bigg\lbrace p\in\Omega\mid\forall\,\rho>0\text{ such that } \limsup_{i\to\infty}\sup_{B_\rho(p)\cap\Sigma_i}\vert A_{\Sigma_i}\vert =\infty\bigg\rbrace\,.
\end{align*}
\end{defin}

Clearly, $\mathrm{reg}(\mathcal{S})$ is an open subset of $\Omega$, while $\mathrm{sing}(\mathcal{S})$ is closed in $\Omega$. In general, $\mathrm{sing}(\mathcal{S})\subset\Omega\setminus\mathrm{reg}(\mathcal{S})$ is a strict inclusion, however, by Lemma I.1.4 in \cite{cm24} there exists a subsequence $\mathcal{S}'$ of $\mathcal{S}$ so that $\Omega=\mathrm{reg}(\mathcal{S}')\cup\mathrm{sing}(\mathcal{S}')$. Without loss of generality, we will then consider sequences $\mathcal{S}$ that admit this decomposition.

This work will be centred around limit laminations of minimal disk sequences, so it will be convenient to introduce the following definition (inspired by \cite{WhiteStrucPaper}).
\begin{defin} Let us take a closed set $K\subset\Omega$ in our ambient Riemannian three-manifold $\Omega$. Let us introduce a smooth proper minimal lamination $\mathcal{L}$ in $\Omega\setminus K$ and a sequence $\mathcal{S}=\lbrace\Sigma_i\rbrace_{i\in\mathbb{N}}$ of properly embedded minimal disks in $\Omega$.

We will refer to the quadruple $(\Omega,K,\mathcal{L},\mathcal{S})$ as a \textit{minimal disk sequence} if
\begin{itemize}
\item[i.] $\mathrm{sing}(\mathcal{S})=K$, and
\item[ii.] $\Sigma_i\setminus K$ converge to $\mathcal{L}$ in $C^{\infty,\alpha}_{loc}(\Omega\setminus K)$, for some $\alpha\in (0,1)$.
\end{itemize}
\end{defin}

The case where the $\Sigma_i$ are assumed to be disks has been extensively studied and some structural results have been proved on the possible singular sets $K$ and limit laminations $\mathcal{L}$ of a minimal disk sequence $(\Omega,K,\mathcal{L},\mathcal{S})$. For example, in \cite{cm21,cm22,cm23,cm24} Colding and Minicozzi show that $K$ must be contained in a Lipschitz curve and that for any point $p\in K$ there exists a leaf of $\mathcal{L}$ that extends smoothly across $p$. 

When $\Omega=\mathbb{R}^3$, they further show that either $K=\emptyset$ or $\mathcal{L}$ is a foliation of $\mathbb{R}^3\setminus K$ by parallel planes and that $K$ consists of a connected Lipschitz curve which meets the leaves of $\mathcal{L}$ transversely. Using this result, Meeks and Rosenberg showed in \cite{mr8} that the helicoid is the unique non-flat properly embedded minimal disk in $\mathbb{R}^3$. This uniqueness was then used by Meeks in \cite{me30} to prove that if $\Omega=\mathbb{R}^3$ and $K\neq\emptyset$, then $K$ is a line orthogonal to the leaves of $\mathcal{L}$, which is precisely the limit of a sequence of rescalings of a helicoid.

For an arbitrary Riemannian three-manifold, such a simple description is not possible. In \cite{cm28}, Colding and Minicozzi construct a sequence of properly embedded minimal disks in the unit ball $\mathbb{B}_1(0)\subset\mathbb{R}^3$ which has $K=\lbrace 0\rbrace$ and whose limit lamination consists of three leaves: two non-proper disks that spiral into the third, which is the punctured unit disk in the $x_3$-plane. Inspired by this example, more cases have been constructed where the singular set $K$ consists of any closed subset of a line (\cite{Dean2006,Khan,Kleene,HoffmanWhite1}), as well as examples where $K$ is curved (\cite{mwe1}). Finally, Hoffman and White \cite{hofw2} have also constructed minimal disk sequences in which $K=\emptyset$ and the limit lamination $\mathcal{L}$ has a leaf which is a proper annulus in $\Omega$.



\begin{prop}\label{Leaveshavegenliftprop}
Leaves of a minimal disk sequence in $\Omega$ have the generalised simple lift property.
\end{prop}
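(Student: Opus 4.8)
The plan is to use the minimal disks $\Sigma_i$ themselves, for $i$ large, as the embedded disks $\Delta$ demanded by Definition~\ref{genliftprop}, and to obtain the required $\delta$-lifts by lifting the given curves into the nearly flat sheets of $\Sigma_i$ lying over the leaf. Let $L$ be a leaf of a minimal disk sequence $(\Omega,K,\mathcal{L},\mathcal{S})$, fix $\delta>0$ and $p\in L$, let $\gamma_1,\dots,\gamma_n$ be smooth curves through $p$ with $\gamma_j(t_j)=p$, and let $U\subset L$ be a pre-compact open set containing them, which we may take to be connected (replacing it by the component that contains the curves). Since $U$ is pre-compact in the leaf, $\overline U$ is a compact subset of $L\subset\Omega\setminus K$; I first pick $\epsilon=\epsilon(U,\delta)>0$ small enough that $\mathcal{N}_\epsilon(U)$ is regular and $\epsilon<\delta$, and write $\Pi_L$ for the nearest-point projection onto $L$.

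The first step is to extract what the convergence $\Sigma_i\setminus K\to\mathcal{L}$ in $C^{\infty,\alpha}_{loc}(\Omega\setminus K)$ gives over the compact set $\overline U$, which is disjoint from $K$. There the lamination is described by foliation charts, and for $i$ large $\Sigma_i\cap\mathcal{N}_\epsilon(U)$ consists of sheets that are $C^\infty$-close to the leaves of $\mathcal{L}$ meeting $\mathcal{N}_\epsilon(U)$, so that their tangent planes are arbitrarily close to those of $L$. Hence, for $i$ large, every tangent vector of $\Sigma_i\cap\mathcal{N}_\epsilon(U)$ is $\delta$-parallel to $L$ and $\Sigma_i\cap\mathcal{N}_\epsilon(U)$ is a $\delta$-graph over $U$; I set $\Delta:=\Sigma_i$, which is an embedded disk. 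The same $C^1$-smallness makes the sheets so flat that the distance to $L$ varies by less than $\epsilon/4$ along any sheet over $\overline U$. Finally I fix a point $q\in\mathcal{N}_{\epsilon/4}(p)\cap\Sigma_i$ over $p$, which exists because $\Sigma_i$ accumulates on $p\in L$.

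Next I lift the curves. As $\Pi_L$ restricts to a local diffeomorphism on these $\delta$-parallel sheets, each $\gamma_j$ lifts, starting from $q$ at the parameter $t_j$, to a curve $\lift{\gamma}_j$ with $\Pi_L\circ\lift{\gamma}_j=\gamma_j$ and $\lift{\gamma}_j(t_j)=q$; by the flatness estimate the lift stays within distance $\epsilon/2$ of $L$, so it cannot leave the tube over $U$ and is therefore defined on all of $[0,1]$. Being tangent to a $\delta$-parallel sheet, $\lift{\gamma}_j$ is a $\delta$-lift, and, as it projects onto $\gamma_j$ while remaining within distance $\epsilon/2<\delta$ of $L$, it lies in $\mathcal{N}_\delta(\gamma_j)$. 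Since all the $\lift{\gamma}_j$ pass through the single point $q\in\mathcal{N}_\epsilon(p)\cap\Delta$, they lie in one connected component $\Gamma$ of $\Delta\cap\mathcal{N}_\epsilon(U)$; this secures conditions~1,~2 and~3 of Definition~\ref{genliftprop}.

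The remaining condition~4, that $\Gamma$ be a $\delta$-cover of $U$, is the step I expect to be the main obstacle, as it asks the connected $\delta$-graph $\Gamma$ to project \emph{onto} all of $U$. I would establish $\Pi_L(\Gamma)=U$ by path-lifting: given $x\in U$, join $p$ to $x$ by a path $\sigma$ in the connected open set $U$ and lift $\sigma$ from $q$ into the sheet through $q$, again using that $\Pi_L$ is a local diffeomorphism. The image of $\sigma$ is compact and contained in $U$, so a maximal lift stays over it, away from the lateral boundary $\Pi_L^{-1}(\partial U)$, and the flatness estimate keeps it within distance $\epsilon/2<\epsilon$ of $L$, so it never reaches the outer boundary of $\mathcal{N}_\epsilon(U)$ either. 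The maximal lift thus stays in $\Sigma_i\cap\mathcal{N}_\epsilon(U)$, has an interior limit at the end of its interval, and hence extends to all of $\sigma$ while remaining in the component $\Gamma$; its endpoint exhibits $x\in\Pi_L(\Gamma)$. Since $U$ is connected this yields $\Pi_L(\Gamma)=U$, so $\Gamma$ is a $\delta$-cover and $\lift{\gamma}_1\cup\cdots\cup\lift{\gamma}_n$ is the desired generalised simple $\delta$-lift pointed at $(p,q)$. The genuinely delicate input throughout is the uniform flatness of $\Sigma_i$ over all of $\overline U$ that confines the lifted sheets to the tube, and this is exactly what the $C^{\infty,\alpha}_{loc}$ convergence away from $K$ provides for $i$ large.
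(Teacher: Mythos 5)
Your overall strategy coincides with the paper's (take $\Delta=\Sigma_i$ for $i$ large, lift curves into its sheets, and get the $\delta$-cover property by path-lifting), but there is a genuine gap at exactly the step you call ``the genuinely delicate input'': the claim that $C^{\infty,\alpha}_{loc}$ convergence makes the sheets so flat that \emph{the distance to $L$ varies by less than $\epsilon/4$ along any sheet over $\overline U$}. This additive bound does not follow from the convergence, and it is false in general. The sheets of $\Sigma_i$ in $\mathcal{N}_\epsilon(U)$ are $C^1$-close to leaves of $\mathcal{L}$, but the leaves of $\mathcal{L}$ inside the tube, other than $U$ itself, need not sit at nearly constant distance from $L$: in the Colding--Minicozzi example cited in the paper (\cite{cm28}), two non-proper leaves spiral into the punctured-disk leaf, so their distance to that leaf decays \emph{multiplicatively} as one runs along it, and the nearby sheets of $\Sigma_i$ do the same. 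The correct estimate, which the paper extracts from Proposition B.1 in Appendix B of \cite{cm24} (a Harnack-type bound on the straightening maps), is multiplicative rather than additive: a sheet at distance $\lambda$ from $L$ is a $C\lambda$-graph, i.e.\ its tilt is proportional to its height, which along a partial lift $\lift{\sigma}$ of a unit-speed curve $\sigma$ gives $\bigl\vert\frac{d}{dt}\,dist^{\Omega}\bigl(\sigma(t),\lift{\sigma}(t)\bigr)\bigr\vert\le C\,dist^{\Omega}\bigl(\sigma(t),\lift{\sigma}(t)\bigr)$, hence growth by a factor $e^{Ct}$.

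Because of this exponential growth, your choice of base point $q\in\mathcal{N}_{\epsilon/4}(p)\cap\Sigma_i$ is not good enough: starting at height $\epsilon/4$, after lifting a curve of length comparable to $l+d$ (with $l$ the length of the curves and $d$ the diameter of $U$) the height can reach $\frac{\epsilon}{4}e^{C(l+d)}\gg\epsilon$, so the lift exits the tube $\mathcal{N}_\epsilon(U)$; then both the construction of the $\delta$-lifts (your conditions 1 and 3) and the path-lifting argument for condition 4 collapse, since the maximal lift can terminate at the outer boundary of the tube rather than over a point of $U$. This is precisely why the paper first fixes $C=C(U)$, chooses $\epsilon$ with $C\epsilon<\min\{1,\delta\}$, and then picks $q$ within $\mu\epsilon$ of $p$ with the \emph{exponentially small} factor $\mu=\frac34\exp(-2C(l+d))$: Gronwall's inequality then gives $dist^{\Omega}\bigl(\sigma(t),\lift{\sigma}(t)\bigr)\le e^{Ct}\,dist^{\Omega}(p,q)<\mu\epsilon\, e^{Ct}<\epsilon$ for the whole relevant range of $t$, which is the open condition that lets the lift be continued. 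Your proof becomes correct once you replace the additive flatness claim by this Harnack/Gronwall input and move $q$ exponentially close to $p$; without that replacement the argument fails on actual minimal disk sequences with spiraling limit leaves.
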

\begin{proof}
Given $L$ a leaf of $\mathcal{L}$, if $L$ is a disk, the curves $\gamma_i$ in $L$ are themselves their own simple $\delta$-lifts in any pre-compact open set $U\subset L$ that contains them. Hence the proposition holds trivially, with $q=p$.

In the more general case, when $L$ is not a disk, it is sufficient to prove the existence of a generalised simple lift of a single curve $\gamma$. By Proposition B.1 in Appendix B of \cite{cm24}, we obtain a bound on the Lipschitz norms of the straightening maps, which implies that for each pre-compact open subset $U\subset L$, there is a constant $C=C(U)$ such that $C\lambda\in (0,1)$, and then for each $\Sigma_i\in\mathcal{S}$, $\mathcal{N}_\lambda(U)\cap\Sigma_i$ is a (possibly empty) $C\lambda$-graph over $U$. Given a curve $\gamma:[0,1]\to L$ contained in an open pre-compact subset $U\subset L$, let us denote by $l$ the length of $\gamma$ and  $d$ the diameter of $U$. For any $\delta>0$, choose $\epsilon>0$ such that $C\epsilon <\min\lbrace 1,\delta\rbrace$. Let $\mu=\frac{3}{4}\exp(-2C(l+d))$ and pick $\Sigma_\mu\in\mathcal{S}$ such that $\mathcal{N}_{\mu\epsilon}(p)\cap\Sigma_\mu\neq\emptyset$, where $p=\gamma(0)$. Let $\Gamma$ be a component of $\Sigma_\mu\cap\mathcal{N}_\epsilon(U)$ which contains a point $q\in\mathcal{N}_{\mu\epsilon}(p)\cap\Gamma$. We have chosen $\epsilon>0$ so that $\Sigma_\mu\cap\mathcal{N}_{\epsilon}(U)$ is a $\delta$-graph over $U$. We claim that $\Gamma$ is a $\delta$-cover of $U$ containing a $\delta$-lift of $\gamma$. This follows by showing that any curve in $U$ of length at most $2(l+d)$ starting at $p$ has a lift in $\Gamma$ starting at $q$. By construction, this lift is necessarily a $\delta$-lift. 

Indeed, if $\sigma:[0,T]\to U$ is parametrised by arclength, and $\lift{\sigma}:[0,T']\to\Gamma $ satisfies $\Pi_L(\lift{\sigma}(t))=\sigma(t)$ for some $0<T'<T$, then
\begin{align*}
\bigg\vert\frac{d}{dt} dist^{\Omega}\big(\sigma(t),\lift{\sigma}(t)\big)\bigg\vert\le C\,dist^{\Omega}\big(\sigma(t),\lift{\sigma}(t)\big)
\end{align*}
and so
\begin{align*}
dist^{\Omega}\big(\sigma(t),\lift{\sigma}(t)\big)\le\exp(Ct)\cdot dist^{\Omega}(p,q)<\epsilon\mu\,\exp(Ct)<\epsilon\,,
\end{align*}
where the last inequality follows from the fact that $t\le T\le l+d$. Furthermore, if $t<T$, then the lift $\lift{\sigma}(t)$ may be extended past $t$ provided $dist^\Omega\big(\sigma(t),\lift{\sigma}(t)\big)<\epsilon$, which proves that leaves of a minimal disk sequence have the generalised simple lift property as claimed.
\end{proof}

This result then implies:
\begin{prop}\label{LeafResult}
If $L$ is an embedded compact surface obtained as a leaf  of a minimal disk sequence $(\Omega,K,\mathcal{S},\mathcal{L})$ then it must be topologically $\mathbb{S}^2$, $\mathbb{T}^2$, $\mathbb{R}P^2$,  $\mathbb{R}P^2_2\cong K$, 
 $\mathbb{R}P^2_3\cong\mathbb{T}^2\#\mathbb{R}P^2$ or $\mathbb{R}P^2_4\cong \mathbb{T}^2\#K$.
\end{prop}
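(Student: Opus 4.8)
The plan is to obtain this as an immediate consequence of the two results that directly precede it, since all of the substantive work has already been done. First I would recall that, by definition, every leaf $L$ of the lamination $\mathcal{L}$ in a minimal disk sequence $(\Omega,K,\mathcal{S},\mathcal{L})$ is a connected embedded surface in $\Omega$. Proposition \ref{Leaveshavegenliftprop} then guarantees that any such leaf has the generalised simple lift property; this is the step carrying the analytic content, as its proof uses the uniform bound on the Lipschitz norms of the straightening maps (Proposition B.1 in Appendix B of \cite{cm24}) to produce, for every $\delta>0$, genuine $\delta$-lifts of curves in $L$ onto the nearby embedded disks $\Sigma_\mu$.

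With the generalised simple lift property in hand, I would simply invoke Corollary \ref{FirstResult}, whose hypotheses require only that $L$ be an embedded compact surface in $\Omega$ enjoying this property. That corollary classifies such surfaces as exactly $\mathbb{S}^2$, $\mathbb{T}^2$, $\mathbb{R}P^2$, $\mathbb{R}P^2_2\cong K$, $\mathbb{R}P^2_3\cong\mathbb{T}^2\#\mathbb{R}P^2$ and $\mathbb{R}P^2_4\cong\mathbb{T}^2\#K$, which is precisely the asserted list. Chaining the two statements therefore closes the argument, and the only point that genuinely needs checking is the compatibility of hypotheses, namely that a compact leaf is indeed an embedded compact surface sitting inside $\Omega$ to which Corollary \ref{FirstResult} applies; this is immediate from the definition of a leaf.

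I do not anticipate any real obstacle in the proof of this proposition itself: the difficulty has all been packaged into the preceding results, namely the Lifting Lemma and Lemma \ref{no_two_closed} (which together drive the exclusion of a double torus minus a disk through Claims \ref{noclosedlift} and \ref{noopenlift}, and hence Proposition \ref{orientableprop} and Corollary \ref{FirstResult}), combined with the lifting construction of Proposition \ref{Leaveshavegenliftprop}. I would emphasise that the sharper conclusion advertised in the introduction, restricting compact leaves further to the torus and the Klein bottle, is not claimed here and would require an additional argument exploiting the minimality of the leaves; accordingly I would keep the present proof confined to the straightforward combination of Proposition \ref{Leaveshavegenliftprop} and Corollary \ref{FirstResult}.
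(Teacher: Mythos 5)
Your proposal is correct and matches the paper's own reasoning exactly: the paper states Proposition \ref{LeafResult} as an immediate consequence of Proposition \ref{Leaveshavegenliftprop} (leaves have the generalised simple lift property) combined with Corollary \ref{FirstResult} (the classification of compact surfaces with that property), which is precisely your argument. Your added remark that the sharper torus/Klein bottle restriction is deferred to a separate argument is also consistent with the paper (see Remark \ref{LiftingArgument} and Section 5).
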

\begin{oss}\label{LiftingArgument}
By applying a lifting argument, one can further rule out the sphere $\mathbb{S}^2$ and the projective plane $\mathbb{R}P^2$.
\end{oss}

Combining together Proposition \ref{LeafResult} and the previous remark, we then obtain the following result.

\begin{cor}
 An embedded compact surface $L$ obtained as a leaf of a minimal disk sequence $(\Omega,K,\mathcal{S},\mathcal{L})$ must be topologically $\mathbb{T}^2$, $\mathbb{R}P^2\cong K$, $\mathbb{R}P^2_3\cong\mathbb{T}^2\#\mathbb{R}P^2$ or $\mathbb{R}P^2_4\cong \mathbb{T}^2\#K$.
 \end{cor}


\printbibliography

\end{document}